\newtheorem{The}{Theorem}
\newtheorem{Lem}[The]{Lemma}
\newtheorem{Cor}[The]{Corollary}
\newtheorem{Question}[The]{Question}
\newtheorem*{ackn}{Acknowledgements}
\newcommand{\B}{\mathbb{B}}
\newcommand{\C}{\mathbb{C}}
\newcommand{\N}{\mathbb{N}}
\newcommand{\Z}{\mathbb{Z}}
\newcommand{\E}{\mathcal{E}}
\newcommand{\F}{\mathcal{F}}
\begin{document}
 \title[ Cegrell's class $\mathcal{F}$]{Some remarks on the Cegrell's class $\mathcal{F}$}

\setcounter{tocdepth}{1}

  \author{Hoang-Son Do} 
\address{Institute of Mathematics \\ Vietnam Academy of Science and Technology \\18
Hoang Quoc Viet \\Hanoi \\Vietnam}
\email{hoangson.do.vn@gmail.com , dhson@math.ac.vn}
\author{Thai Duong Do}
\address{Institute of Mathematics \\ Vietnam Academy of Science and Technology \\18
	Hoang Quoc Viet \\Hanoi \\Vietnam}
\email{dtduong@math.ac.vn}
 \date{\today\\ This research is funded by Vietnam National Foundation for Science and Technology Development (NAFOSTED) under grant number 101.02-2017.306.}


\begin{abstract}
  In this paper, we study the near-boundary behavior of functions $u\in\F (\Omega)$ in the case where $\Omega$ is strictly pseudoconvex.
  We also introduce a  sufficient condition for belonging to $\F$ in the case where $\Omega$ is the unit ball.
\end{abstract}

\maketitle

\section*{Introduction}
Let $\Omega$ be a bounded hyperconvex domain in $\C^n$. By \cite{Ceg04}, the class $\F (\Omega)$ is defined
as the following: $u\in\F (\Omega)$ iff there exists a sequence of functions
 $u_j\in \E_0(\Omega)$ such that $u_j\searrow u$ as $j\rightarrow\infty$ and 
 $\sup_j\int_{\Omega}(dd^cu_j)^n<\infty$. Here
 \begin{center}
 	$\E_0(\Omega)=\{u\in PSH(\Omega)\cap L^{\infty}(\Omega):
 	\lim\limits_{z\to\partial\Omega}u(z)=0, \int\limits_{\Omega}(dd^cu)^n<\infty \}.$
 \end{center}
 
 The class $\F(\Omega)$ has many nice properties. This is a subclass of the domain of  definition of Monge-Amp\`ere operator 
 \cite{Ceg04, Blo06}. Moreover, by \cite{Ceg04}, for each sequence of functions
 $u_j\in \E_0(\Omega)$ such that $u_j\searrow u\in\F (\Omega)$ as $j\rightarrow\infty$, we have
 \begin{center}
 	$\lim\limits_{j\to\infty}\int\limits_{\Omega}(dd^cu_j)^n=\int\limits_{\Omega}(dd^cu)^n$.
 \end{center}
 .
 
 By \cite{Ceg98, Ceg04},  for every pluripolar
 set $E\subset\Omega$, there exists $u\in\F (\Omega)$ such that $E\subset\{u=-\infty\}$. In \cite{Ceg04},
 Cegrell also proved some inequalities, a 
 generalized comparison principle and a decomposition of $(dd^cu)^n, u\in\F (\Omega)$. 
 In \cite{NP09}, Nguyen and Pham proved a strong version of comparison principle in the class $\F(\Omega)$.
 
 The class $\F (\Omega)$ has been used to characterize the boundary behavior in the
 Dirichlet problem for Monge-Amp\`ere equation \cite{Ceg04, Aha07}. For every $u\in\F (\Omega)$,
 for each $z\in\partial\Omega$,
 we have 	$\limsup\limits_{\Omega\ni\xi\to z}u(\xi)=0$ (see \cite{Aha07}). Moreover, if we define by $\mathcal{N}$ the 
 set of functions
 in the domain of  definition of Monge-Amp\`ere operator with smallest maximal plurisubharmonic majorant identically zero then,
 by the comparison principles in $\F$ and in $\mathcal{N}$  (see \cite{NP09} and \cite{ACCP09})  and by Cegrell's approximation theorem
 \cite{Ceg04} (see also Lemma \ref{the cegrell appro}), we have
 \begin{center}
 	$\F (\Omega)=\{u\in\mathcal{N}(\Omega): \int\limits_{\Omega}(dd^cu)^n<\infty \}$.
 \end{center}
 
 In this paper, we study the near-boundary behavior of functions $u\in\F(\Omega)$ in the case where 
 $\Omega$ is a bounded strictly pseudoconvex domain, i.e., there exists $\rho\in PSH(\Omega)\cap C(\overline{\Omega})$ such that
   $\rho|_{\partial\Omega}=0$, $D\rho|_{\partial\Omega}\neq 0$ and $dd^c\rho\geq c\omega:=cdd^c|z|^2$ for some $c>0$.
   
   Our first main result is the following:
 \begin{The}\label{main 1}
 	Assume that $\Omega$ is a strictly pseudoconvex domain in $\C^n$ and $u\in \F(\Omega)$. 
 	Then, there exists $C>0$ depending only on  $\Omega, n$ and $u$ such that \\
 	\begin{equation}
 		Vol_{2n}(\{z\in\Omega| d(z, \partial\Omega)<d, u(z)<-\epsilon \})\leq\dfrac{C.d^{n+1-na}}{a^n\epsilon^n},
 	\end{equation}
 	for any $\epsilon, d>0$, $a\in (0, 1)$.
 \end{The}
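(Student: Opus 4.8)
The plan is to compare $u$ with an explicit plurisubharmonic barrier built from $\rho$ and to read off the volume bound from a lower estimate for the Monge--Amp\`ere mass of the barrier inside the boundary layer. Set $M:=\int_\Omega(dd^cu)^n<\infty$ and $E:=\{z\in\Omega:d(z,\partial\Omega)<d,\ u(z)<-\epsilon\}$. We may assume $\rho$ is smooth on $\overline\Omega$ (a standard reduction, since only the behaviour of $\rho$ in a fixed boundary layer will be used, where a smooth strictly plurisubharmonic defining function is available), and we fix $d_0\in(0,1]$ and constants $L,c_1>0$ such that on $\Omega_{d_0}:=\{z\in\Omega:d(z,\partial\Omega)<d_0\}$ one has $-\rho\le L\,d(\cdot,\partial\Omega)$ (Lipschitz continuity of $\rho$) and $|\partial\rho|^2:=\sum_j|\partial\rho/\partial z_j|^2\ge c_1$ (here $D\rho|_{\partial\Omega}\neq 0$ is used). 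It suffices to prove the estimate for $0<d\le d_0$: if $d>d_0$ then $E\subseteq\{u<-\epsilon\}$, the bound $Vol_{2n}(\{u<-\epsilon\})\le C\epsilon^{-n}$ follows from the argument below applied with a fixed barrier, and $d^{\,n+1-na}/(a^n\epsilon^n)$ is bounded below by a positive multiple of $\epsilon^{-n}$ once $d\ge d_0$.

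Fix $0<d\le d_0$, put $\delta:=Ld$, and define
\[
v:=-\epsilon+\frac{\epsilon\delta}{\delta-\rho}=\psi\circ\rho,\qquad \psi(t):=-\epsilon+\frac{\epsilon\delta}{\delta-t}\quad(t\le 0).
\]
On $(-\infty,0]$ the function $\psi$ is smooth, increasing and convex with $\psi(0)=0$, so $v\in PSH(\Omega)\cap C^\infty(\overline\Omega)$; since $-\epsilon<v\le 0$ and $v\to0$ at $\partial\Omega$, we get $v\in\E_0(\Omega)\subseteq\F(\Omega)$. Because $v>-\epsilon$ everywhere we have $\{u<-\epsilon\}\subseteq\{u<v\}$, hence $E\subseteq\{u<v\}$, and the comparison principle in $\F(\Omega)$ (see \cite{Ceg04} and \cite{NP09}) gives
\[
\int_{\{u<v\}}(dd^cv)^n\ \le\ \int_{\{u<v\}}(dd^cu)^n\ \le\ M.
\]

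The key step is a lower bound for $(dd^cv)^n$ on $\Omega_d$ which exploits the mixed term in the expansion
\[
(dd^cv)^n=(\psi'(\rho))^n(dd^c\rho)^n+n(\psi'(\rho))^{n-1}\psi''(\rho)\,(dd^c\rho)^{n-1}\wedge d\rho\wedge d^c\rho,
\]
both summands being nonnegative. Discarding the first and using $dd^c\rho\ge c\,\omega$ with $\omega=dd^c|z|^2$, the identity $\omega^{n-1}\wedge d\rho\wedge d^c\rho=c_n|\partial\rho|^2\,\omega^n$ for a dimensional constant $c_n>0$, the bound $|\partial\rho|^2\ge c_1$ on $\Omega_d\subseteq\Omega_{d_0}$, the computation $(\psi'(t))^{n-1}\psi''(t)=2(\epsilon\delta)^n(\delta-t)^{-(2n+1)}$, and the inequality $0<\delta-\rho\le 2\delta$ valid on $\Omega_d$ (there $-\rho\le Ld=\delta$), we obtain $(dd^cv)^n\ge \kappa\,\epsilon^n d^{-(n+1)}\,dVol_{2n}$ on $\Omega_d$, where $\kappa>0$ depends only on $\Omega$ and $n$. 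As $E\subseteq\Omega_d\cap\{u<v\}$, the last display yields $\kappa\,\epsilon^n d^{-(n+1)}\,Vol_{2n}(E)\le M$, that is, $Vol_{2n}(E)\le(M/\kappa)\,d^{\,n+1}/\epsilon^n$; in particular one even obtains the sharper bound $C\,d^{\,n+1}/\epsilon^n$ for $d\le d_0$.

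Finally, for $0<d\le d_0\le1$ and any $a\in(0,1)$ we have $d^{\,n+1}\le d^{\,n+1-na}$ and $a^{-n}>1$, so the previous bound implies $Vol_{2n}(E)\le(M/\kappa)\,d^{\,n+1-na}/(a^n\epsilon^n)$; combined with the case $d>d_0$ this proves the theorem, with $C$ depending only on $\Omega$, $n$ and $\int_\Omega(dd^cu)^n$. I expect the only real obstacle to be the realization that comparing $u$ with a function of $\rho$ through the term $(\psi'(\rho))^n(dd^c\rho)^n$ alone yields merely the weaker bound $d^n/\epsilon^n$; the extra power of $d$, and the precise point where the hypothesis $D\rho|_{\partial\Omega}\neq 0$ actually enters, come from the mixed term $(dd^c\rho)^{n-1}\wedge d\rho\wedge d^c\rho$, whose density stays bounded below near $\partial\Omega$.
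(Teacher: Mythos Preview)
Your proof is correct and takes a genuinely different route from the paper. The paper compares $u$ with the one-parameter family of barriers $\epsilon\rho_a:=-\epsilon(-\rho)^a$; computing $(dd^c\rho_a)^n$ and applying the comparison principle yields the bound $C\,d^{\,n+1-na}/(a^n\epsilon^n)$ directly, with the $a$-dependence built into the barrier itself. You instead use a single $a$-free barrier $v=-\epsilon+\epsilon\delta/(\delta-\rho)$ (with $\delta$ proportional to $d$) and obtain the sharper inequality $Vol_{2n}(E)\le C\,d^{\,n+1}/\epsilon^n$ for $d\le d_0$; the stated estimate then follows trivially from $d^{\,n+1}\le d^{\,n+1-na}$ and $a^{-n}>1$. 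Both arguments hinge on the same mechanism---the mixed term $(dd^c\rho)^{n-1}\wedge d\rho\wedge d^c\rho$ in the expansion of $(dd^c\psi(\rho))^n$, whose density is bounded below near $\partial\Omega$ precisely because $D\rho|_{\partial\Omega}\neq 0$---but your choice of $\psi$ extracts the full exponent $n+1$, whereas the paper's choice $\psi(t)=-(-t)^a$ loses $na$ in the exponent (compensated by the factor $a^n$). Your handling of the range $d>d_0$ via the global bound $Vol_{2n}(\{u<-\epsilon\})\le C'\epsilon^{-n}$ together with the observation that $d^{\,n+1-na}/a^n\ge\min(1,d_0^{\,n+1})$ for $d\ge d_0$ is also tidier than the paper's, where the displayed constant $C=\max\{C_7,\,a^n\epsilon^n Vol_{2n}(\Omega)/d_0^{\,n+1-na}\}$ inadvertently depends on $a$ and $\epsilon$.
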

 For the convenience, we denote $W_d=\{z\in\Omega| d(z, \partial\Omega)<d \}$. 
 By Theorem \ref{main 1}, we have
 \begin{center}
 	$\lim\limits_{d\to 0}\dfrac{Vol_{2n}(\{z\in W_d|  u(z)<-\epsilon \})}{d^t}=0$,
 \end{center}
 for every $0<t<n+1$. It helps us to estimate the ``density'' of the the set $\{u<-\epsilon\}$ near the
 boundary.
 
  Moreover,  by using Theorem \ref{main 1} for $\epsilon=d^{\alpha}$
 and $0<a<1-\alpha$, we have
 \begin{Cor}
 		Assume that $\Omega$ is a strictly pseudoconvex domain in $\C^n$ and $u\in \F(\Omega)$. 
 	Then, for every $0<\alpha<1$,
 	\begin{center}
 		$\lim\limits_{d\to 0}\dfrac{Vol_{2n}(\{z\in W_d|  u(z)<-d^{\alpha} \})}{d}=0$.
 	\end{center}
 \end{Cor}
When $\Omega$ is the unit ball, this result can be improved as following:
\begin{The}\label{main 2}
	If $u\in\F(\B^{2n})$ then
	\begin{center}
		$\lim\limits_{r\to 1^-}\dfrac{\int_{\{|z|=r\}}|u(z)|d\sigma(z)}{1-r}<\infty$.
	\end{center}
In particular, there exists $C>0$ such that
\begin{center}
		$\limsup\limits_{d\to 0^+}\dfrac{Vol_{2n}(\{z\in\mathbb{B}^{2n} : \|z\|>1-d, u(z)<-Ad\})}{d}<\dfrac{C}{A},$
\end{center}
for every $A>0$.
\end{The}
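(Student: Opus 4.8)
The plan is to reduce the statement to classical potential theory for the radial means of $u$; the argument will hinge on two facts, which I would prove first. Write $B_r=\{z:|z|<r\}$ and, for $0<r<1$, let
\[
M(r)=\frac{1}{\sigma_{2n}r^{2n-1}}\int_{\{|z|=r\}}u\,d\sigma
\]
be the spherical mean of $u$, where $\sigma$ is surface measure and $\sigma_{2n}$ is the area of the unit sphere of $\R^{2n}$, so that $\int_{\{|z|=r\}}|u|\,d\sigma=-\sigma_{2n}r^{2n-1}M(r)$ (note $u\le 0$, since $u$ is a decreasing limit of functions of $\E_0(\B^{2n})$, all of which are $\le 0$). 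Here $u$ is subharmonic with Riesz measure $\mu:=\Delta u\ge 0$, a fixed positive multiple of $dd^cu\wedge(dd^c|z|^2)^{n-1}$. The two facts are: (a) $\mu(\B^{2n})<\infty$, and (b) $\lim_{r\to1^-}M(r)=0$.

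For (a) I would argue as follows: choose $u_j\in\E_0(\B^{2n})$ with $u_j\searrow u$ and $\sup_j\int_{\B^{2n}}(dd^cu_j)^n<\infty$; since $|z|^2-1\in\E_0(\B^{2n})$ and $dd^c(|z|^2-1)=dd^c|z|^2$, Cegrell's mixed-mass (H\"older-type) inequalities from \cite{Ceg04} give
\[
\int_{\B^{2n}}dd^cu_j\wedge(dd^c|z|^2)^{n-1}\le\Big(\int_{\B^{2n}}(dd^cu_j)^n\Big)^{1/n}\Big(\int_{\B^{2n}}(dd^c|z|^2)^n\Big)^{(n-1)/n},
\]
which is bounded uniformly in $j$; as $u_j\to u$ in $L^1_{loc}$ one has $\Delta u_j\to\mu$ weakly, so lower semicontinuity of mass on the open set $\B^{2n}$ yields $\mu(\B^{2n})<\infty$. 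For (b) I would use $u\in\F(\B^{2n})\subset\mathcal{N}(\B^{2n})$: since $u\le 0$ it admits a least harmonic majorant $h$ on $\B^{2n}$, and $h$ is $\le 0$ while also being a maximal plurisubharmonic majorant of $u$, hence $\ge$ the smallest such majorant, which is $0$; thus $h\equiv 0$, and since $M$ is nondecreasing with $\lim_{r\to1^-}M(r)=h(0)$, we get $\lim_{r\to1^-}M(r)=0$.

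Granting (a) and (b), the divergence theorem gives $M'(r)=\mu(B_r)/(\sigma_{2n}r^{2n-1})$, so integrating from $r$ to $1$ and using (b),
\[
-M(r)=\int_r^1\frac{\mu(B_t)}{\sigma_{2n}t^{2n-1}}\,dt\le\frac{(1-r)\,\mu(\B^{2n})}{\sigma_{2n}r^{2n-1}},
\]
hence $\int_{\{|z|=r\}}|u|\,d\sigma\le(1-r)\,\mu(\B^{2n})$ for every $r\in(0,1)$; moreover $\mu(B_t)/t^{2n-1}\to\mu(\B^{2n})$ as $t\to1^-$, so averaging over $[r,1]$ shows that the limit in the first assertion exists and equals $\mu(\B^{2n})<\infty$. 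For the second assertion, fix $A>0$; Markov's inequality on each sphere gives $\sigma(\{|z|=r,\ u(z)<-Ad\})\le\frac{1}{Ad}\int_{\{|z|=r\}}|u|\,d\sigma\le(1-r)\mu(\B^{2n})/(Ad)$, and integrating in polar coordinates over $r\in(1-d,1)$,
\[
Vol_{2n}\big(\{z\in\B^{2n}:|z|>1-d,\ u(z)<-Ad\}\big)\le\int_{1-d}^1\frac{(1-r)\,\mu(\B^{2n})}{Ad}\,dr=\frac{\mu(\B^{2n})}{2A}\,d,
\]
so dividing by $d$ and letting $d\to0^+$ gives the stated bound with $C:=\mu(\B^{2n})+1$.

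I expect (a) to be the main obstacle: it is the only place where the full hypothesis $u\in\F$ — finite total Monge-Amp\`ere mass, as opposed to merely $u\in\mathcal{N}$ — is genuinely needed, and it is used through Cegrell's inequalities for mixed Monge-Amp\`ere masses of functions in $\E_0$. I would also be careful in (b): the pointwise boundary relation $\limsup_{\xi\to z}u(\xi)=0$ does not by itself force $\lim_{r\to1^-}M(r)=0$, so it is essential to invoke the $\mathcal{N}$-characterisation via the least harmonic majorant rather than that pointwise statement. Everything downstream of (a) and (b) is classical radial potential theory on the ball.
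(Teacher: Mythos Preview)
Your proof is correct and takes a genuinely different route from the paper's. Both arguments study the spherical mean $\tilde u(z)=\int_{U(n)}u(\phi(z))\,d\mu(\phi)$ (your $M(|z|)$), but the paper stays entirely within the pluripotential framework: it first proves (Lemma~\ref{lem 1 main2}) that averaging a family in $\F$ with uniformly bounded Monge--Amp\`ere masses over a compact probability space again lands in $\F$, so $\tilde u\in\F(\B^{2n})$; then it characterizes radial functions $\chi(\log|z|)$ in $\F(\B^{2n})$ (Lemma~\ref{lem 2 main2}) by the two conditions $\lim_{t\to 0^-}\chi(t)=0$ and $\lim_{t\to 0^-}\chi(t)/t<\infty$, which immediately yields the first assertion. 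You instead drop to real potential theory: the finiteness of the Riesz mass $\Delta u(\B^{2n})$ via Cegrell's mixed H\"older inequality plays the role of Lemma~\ref{lem 1 main2}, the vanishing of the least harmonic majorant (from $u\in\mathcal{N}$) plays the role of the condition $\lim\chi=0$, and the Jensen formula $M'(r)=\Delta u(B_r)/(\sigma_{2n}r^{2n-1})$ replaces Lemma~\ref{lem 2 main2}. Your approach is more quantitative --- it identifies the limit explicitly as $\Delta u(\B^{2n})$ and gives the uniform bound $\int_{\{|z|=r\}}|u|\,d\sigma\le(1-r)\,\Delta u(\B^{2n})$ for every $r\in(0,1)$ --- whereas the paper's route produces two standalone lemmas about the class $\F$ (stability under probability--averaging and the radial characterization) that are of independent interest.
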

 Our second purpose is to find a sharp sufficient condition for $u$ to belong to $\F(\Omega)$ based on the 
 near-boundary behavior of $u$. We are interested in the following question:
 \begin{Question}
 	Let $\Omega$ be a bounded strictly pseudoconvex domain. Assume that $u$ is a negative plurisubharmonic function in $\Omega$
 	satisfying
 	\begin{center}
 		$\lim\limits_{d\to 0^+}\dfrac{Vol_{2n}(\{z\in W_d : u(z)<-Ad\})}{d}=0,$
 	\end{center}
 	for some $A>0$. Then, do we have $u\in\F(\Omega)$?
 \end{Question}
 In this paper, we answer this question for the case where $\Omega$ is the unit ball.
 \begin{The}\label{main 3}
 	Let $u\in PSH^-(\mathbb{B}^{2n})$. Assume that there exists $A>0$ such that
 	\begin{equation}\label{eq main3}
 		\lim\limits_{d\to 0^+}\dfrac{Vol_{2n}(\{z\in\mathbb{B}^{2n} : \|z\|>1-d, u(z)<-Ad\})}{d}=0.
 	\end{equation}
 	Then $u\in \F(\mathbb{B}^{2n})$.
 \end{The}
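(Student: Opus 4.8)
The plan is to produce, from the hypothesis \eqref{eq main3}, a decreasing sequence $u_j\in\E_0(\B^{2n})$ with $u_j\searrow u$ and uniformly bounded Monge--Amp\`ere masses, which is exactly the definition of $\F(\B^{2n})$. The natural candidates are truncations adapted to the boundary: set $u_j=\max(u,\,j(\|z\|^2-1))$ (or $\max(u,\,-j\,d(z,\partial\B))$, which is comparable). Each such $u_j$ is plurisubharmonic, bounded, and tends to $0$ at $\partial\B^{2n}$, hence lies in $\E_0$; and $u_j\searrow u$ pointwise. Everything reduces to the mass bound $\sup_j\int_{\B^{2n}}(dd^cu_j)^n<\infty$.

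To control the mass I would localize near the sphere, since away from the boundary $u_j$ eventually equals $u$ on a fixed compact set and contributes a fixed finite amount (note $u\in PSH^-$ is automatically in the domain of the Monge--Amp\`ere operator on the interior by local considerations, or one first replaces $u$ by $\max(u,-M)$ and works on the core, which carries bounded mass since there $u_j$ is locally bounded and independent of $j$ for large $j$). On the shell $W_{c/j}$ one has $u_j=j(\|z\|^2-1)$ precisely on the set where $u\ge j(\|z\|^2-1)$, i.e. roughly where $u(z)\ge -2jd(z,\partial\B)$; and $\{u_j\ne u\}$ is contained in a set of the form $\{\|z\|>1-d_j,\ u(z)<-A'd_j\}$ for a suitable comparison of constants $A'$ with $A$ via a change of the dilation factor. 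Here is where hypothesis \eqref{eq main3} enters: it says exactly that this bad set has volume $o(d_j)=o(1/j)$. Integrating $(dd^cu_j)^n$ over the shell, the ``good'' part is $\{u_j=j(\|z\|^2-1)\}$ where $(dd^cu_j)^n=j^n(dd^c\|z\|^2)^n=j^n\cdot n!\,\omega^n/(2\pi)^n$-type density, carried over a set of measure $\lesssim$ (total shell) $-$ (bad set), i.e. $O(1/j)$; this gives $j^n\cdot O(1/j)=O(j^{n-1})$, which is NOT bounded — so the crude split is too lossy and one must be more careful, e.g. use the comparison principle / integration by parts rather than a pointwise density estimate.

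The better route, which I expect to be the technical heart, is to bound $\int_{\B^{2n}}(dd^cu_j)^n$ by comparison with a fixed function. Use the global estimate $\int_\Omega(dd^cv)^n\le\int_\Omega(dd^cw)^n$ for $v\ge w$ in $\E_0$ (Cegrell), applied not to $u_j$ directly but after integrating by parts against the defining function: write $\int(dd^cu_j)^n$ and peel off one factor, $\int(dd^cu_j)^n=\int dd^c u_j\wedge(dd^cu_j)^{n-1}$, and compare $u_j$ from below by $j(\|z\|^2-1)$ globally, so that repeated integration by parts transfers derivatives onto the smooth function $\|z\|^2$ at the cost of boundary-shell integrals of $|u_j|\le|u|$ times $j^{n-1}$ against $\omega^n$ — and those are controlled by $\int_{\{|z|=r\}}|u|\,d\sigma$, which by the layer-cake formula is exactly $\int_0^\infty Vol_{2n}(\{|z|>1-d,\ u<-t\})\,\frac{dt}{\cdots}$, finite by \eqref{eq main3} (this is the same mechanism underlying Theorem \ref{main 2}, run in reverse). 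Thus the main obstacle is organizing the integration-by-parts so that each of the $n$ steps produces a term that is either a fixed interior mass or a shell integral of the form $j^{k}\int_{W_{c/j}}|u|\,\omega^n$ with the power of $j$ compensated by the $o(1/j)$-smallness coming from \eqref{eq main3}; once that bookkeeping is set up, taking $j\to\infty$ yields $u\in\F(\B^{2n})$.
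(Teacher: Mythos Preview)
Your candidates $u_j=\max\{u,\,j(\|z\|^2-1)\}$ do lie in $\E_0$ and decrease to $u$, but the mass bound is a genuine gap, and the integration-by-parts scheme you sketch does not close. Two concrete problems: first, the claim that ``away from the boundary $u_j$ eventually equals $u$ on a fixed compact set and contributes a fixed finite amount'' presumes $u$ is already in the domain of the Monge--Amp\`ere operator, which is exactly what you are trying to prove (an arbitrary $u\in PSH^-$ need not be). Second, the bookkeeping you outline would need each shell integral $j^k\int_{W_{c/j}}|u|\,\omega^n$ to be $O(1)$, but the hypothesis only controls the volume of the sublevel set $\{u<-Ad\}$ in $W_d$, not $\int_{W_d}|u|$; on the complement of that set $|u|$ may be as large as $Ad$, and the interaction with the contact set of the $\max$ is where Monge--Amp\`ere mass concentrates and cannot be read off from a pointwise decomposition.

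The paper avoids this entirely by exploiting the hypothesis as a \emph{pigeonhole} statement rather than a pointwise one. It first regularizes $u$ by taking a supremum over a small neighbourhood in the unitary group and a small radial dilation:
\[
u_{a,\epsilon}(z)=\bigl(\sup\{u((1+r)\phi(z)):\phi\in U(n),\ \|\phi-\mathrm{Id}\|<a,\ 0\le r\le\epsilon\}\bigr)^*.
\]
For $z$ with $1-3\epsilon\le\|z\|\le 1-\epsilon$ the orbit $B_{a,\epsilon,z}$ has volume $\gtrsim a^{2n-1}\epsilon$, which for fixed $a$ is a fixed multiple of the shell width; the hypothesis says the bad set $\{u<-A\cdot 3\epsilon\}$ in that shell has volume $o(\epsilon)$, so for $\epsilon$ small it cannot cover $B_{a,\epsilon,z}$, and hence $u_{a,\epsilon}(z)\ge -3A\epsilon$ throughout the shell. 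This uniform lower bound lets one glue $u_{a,\epsilon}-6A\epsilon$ with the \emph{fixed} function $3A(\|z\|^2-1)$ to obtain $\tilde u_{a,\epsilon}\in\F(\B^{2n})$ that agrees with $3A(\|z\|^2-1)$ near $\partial\B^{2n}$; by Stokes the total mass equals that of $3A(\|z\|^2-1)$, a constant independent of $a,\epsilon$. Then an almost-everywhere approximation lemma (if $u_j\in\F$, $u_j\to u$ a.e., $\sup_j\int(dd^cu_j)^n<\infty$, then $u\in\F$) finishes. The crucial idea you are missing is this sup-regularization, which converts the $o(d)$ volume hypothesis into a \emph{uniform} near-boundary lower bound and hence a uniform mass bound; your direct truncation cannot achieve that.
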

\begin{Cor}
	Let $u\in\mathcal{N}(\mathbb{B}^{2n})$ such that $\int\limits_{\mathbb{B}^{2n}}(dd^cu)^n=\infty$. Then, for every $A>0$,
	\begin{center}
		$\limsup\limits_{d\to 0^+}\dfrac{Vol_{2n}(\{z\in\mathbb{B}^{2n} : \|z\|>1-d, u(z)<-Ad\})}{d}>0.$
	\end{center}
\end{Cor}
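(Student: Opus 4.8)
The plan is to obtain this corollary as the contrapositive of Theorem \ref{main 3}. Assume, aiming for a contradiction, that there is some $A>0$ with
\[
\limsup_{d\to 0^+}\frac{Vol_{2n}(\{z\in\B^{2n}:\|z\|>1-d,\ u(z)<-Ad\})}{d}=0.
\]
Since the quantity inside the limsup is a nonnegative function of $d$, a vanishing limsup forces the full limit to exist and to equal $0$; in other words, condition \eqref{eq main3} holds for this particular $A$. This is the one place that needs a word of justification, and it is the closest thing to an obstacle here — everything else is bookkeeping.

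Next I would verify that $u$ lies in the scope of Theorem \ref{main 3}. By definition a function in $\mathcal N(\B^{2n})$ has smallest maximal plurisubharmonic majorant identically $0$, hence is negative plurisubharmonic, so $u\in PSH^-(\B^{2n})$. Theorem \ref{main 3} then applies and gives $u\in\F(\B^{2n})$.

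Finally I would invoke the identity recalled in the introduction,
\[
\F(\Omega)=\Bigl\{v\in\mathcal N(\Omega):\int_\Omega(dd^cv)^n<\infty\Bigr\},
\]
with $\Omega=\B^{2n}$ and $v=u$. Membership $u\in\F(\B^{2n})$ yields $\int_{\B^{2n}}(dd^cu)^n<\infty$, which contradicts the standing hypothesis $\int_{\B^{2n}}(dd^cu)^n=\infty$. This contradiction shows that for every $A>0$ the limsup must be strictly positive, which is exactly the assertion of the corollary.
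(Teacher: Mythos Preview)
Your argument is correct and is precisely the intended derivation: the paper states this result as an immediate corollary of Theorem \ref{main 3} without giving a separate proof, and the contrapositive you spell out---Theorem \ref{main 3} forces $u\in\F(\B^{2n})$, whence $\int_{\B^{2n}}(dd^cu)^n<\infty$---is exactly how one reads it off. One minor remark: you do not actually need the full characterization $\F=\{v\in\mathcal N:\int(dd^cv)^n<\infty\}$; finiteness of the Monge--Amp\`ere mass already follows from membership in $\F$ alone, by the convergence $\int(dd^cu_j)^n\to\int(dd^cu)^n$ recalled in the introduction.
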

\begin{ackn}
	The authors would like to thank Professor Pham Hoang Hiep for valuable comments that helped them to improve the manuscript. 
\end{ackn}

\section{Proof of Theorem \ref{main 1}}
Since $\Omega$ is bounded strictly pseudoconvex, there exists $\rho\in C^2(\bar{\Omega}, [0, 1])$ such that $\Omega=\{z: \rho(z)<0\}$
and
\begin{equation}\label{drho.eq}
|D\rho|>C_1 \ \mbox{in}\ \bar{\Omega},
\end{equation}
and
\begin{equation}\label{d2rho.eq}
dd^c\rho\geq C_2 dd^c|z|^2=C_2\omega,
\end{equation}
where $C_1, C_2>0$ are constants.

 By \eqref{drho.eq}, there exist $C_3, C_4>0$ depending only on $\Omega$ and $\rho$ such that
\begin{equation}\label{eq3 proofmain1}
C_3d(z, \partial\Omega)\leq -\rho (z)\leq C_4d(z, \partial\Omega),
\end{equation}
for every $z\in\Omega$.

 For every $a\in(0, 1)$ and $z\in\Omega$, we have
\begin{center}
	$dd^c\rho_a(z):=dd^c(-(-\rho(z))^a)=a(1-a)(-\rho)^{a-2}d\rho\wedge d^c\rho+a(-\rho)^{a-1}dd^c\rho.$
\end{center}
Then
\begin{equation}\label{ddcrhoa.eq}
(dd^c\rho_a)^n\geq a^n(1-a)(-\rho)^{na-n-1}d\rho\wedge d^c\rho\wedge (dd^c\rho)^{n-1}.
\end{equation}
Hence, by \eqref{drho.eq}, \eqref{d2rho.eq} and \eqref{eq3 proofmain1}, there exists
$1\gg d_0>0$ depending only on $\Omega$ and $\rho$ such that, for every $0<d<d_0$
and $z\in  W_d:=\{\xi\in\Omega: d(\xi ,\partial\Omega)<d\}$,
\begin{equation}\label{eq4 proofmain1}
(dd^c\rho_a)^n\geq C_5d^{na-n-1}\omega^n.
\end{equation}

Since $u\in\F(\Omega)$, there exists $\{u_j\}_{j=1}^{\infty}\subset\E_0(\Omega)$ such that $u_j\searrow u$ and
\begin{equation}\label{uinF.eq}
\int\limits_{\Omega}(dd^cu_j)^n<C_6,
\end{equation}
for every $j\in\Z^+$, where $C_6>0$ depends only on $u$.

By using \eqref{eq4 proofmain1}, \eqref{uinF.eq} and the Bedford-Taylor comparison principle 
\cite{BT76, BT82} (see also \cite{Kli91}), 
we have, for every $j\in\Z^+$, $\epsilon, d>0$ and $a\in (0, 1)$,
\begin{flushleft}
	$\begin{array}{ll}
	C_6>\int\limits_{\{u_j<\epsilon\rho_a\}} (dd^c u_j)^n
	&\geq \int\limits_{\{u_j<\epsilon\rho_a\}} (dd^c \epsilon\rho_a)^n\\
	&\geq \dfrac{C_5 a^n\epsilon^n}{d^{n+1-na}}\int\limits_{\{u_j<\epsilon\rho_a\}\cap W_d}\omega^n.
	\end{array}$
\end{flushleft}
Hence, for every $0<d<d_0$,
\begin{center}
	$Vol_{2n}(\{z\in W_d| u_j(z)<-\epsilon\})\leq\dfrac{C_7.d^{n+1-na}}{a^n\epsilon^n},$
\end{center}
where $C_7>0$ depends only on $\Omega, \rho, n$ and $u$.

Letting $j\rightarrow\infty$, we get
\begin{center}
	$Vol_{2n}(\{z\in W_d| u(z)<-\epsilon\})\leq\dfrac{C_7.d^{n+1-na}}{a^n\epsilon^n},$
\end{center}
 for every $0<d<d_0$.
 
 Denote
 \begin{center}
 	$C=\max\{C_7, \dfrac{a^n\epsilon^n Vol_{2n}(\Omega)}{d_0^{n+1-na}}\}.$
 \end{center}
We have
\begin{center}
	$Vol_{2n}(\{z\in W_d| u(z)<-\epsilon\})\leq\dfrac{C.d^{n+1-na}}{a^n\epsilon^n},$
\end{center}
for every $d>0$.

This completes the proof of Theorem \ref{main 1}.
\section{Proof of Theorem \ref{main 2}}
In order to prove  Theorem \ref{main 2}, we need the following lemma:
\begin{Lem}\label{lem 1 main2} Let $\Omega\subset\C^n$ be a bounded hyperconvex domain and $(X, d, \mu)$ be a compact metric probability space. 
	Let $u: \Omega\times X\rightarrow [-\infty, 0)$ such that
	\begin{itemize}
		\item [(i)] For every $a\in X$, $u(., a)\in\F (\Omega)$ and
		\begin{center}
			$\int\limits_{\Omega}(dd^cu(z, a))^n<M$,
		\end{center}
	where $M>0$ is a constant.
	\item[(ii)] For every $z\in\Omega$, the function $u(z, .)$ is upper semicontinuous in $X$.
	\end{itemize}
Then $\tilde{u}(z)=\int\limits_Xu(z, a)d\mu(a)\in\F (\Omega)$.
\end{Lem}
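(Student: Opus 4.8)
The plan is to exhibit an explicit decreasing sequence in $\E_0(\Omega)$ converging to $\tilde u$ with uniformly bounded Monge--Amp\`ere masses, which is exactly the definition of $\F(\Omega)$. First I would reduce to a good exhausting family: for each $a\in X$ pick (by the very definition of $\F(\Omega)$) a sequence $u_j(\cdot,a)\searrow u(\cdot,a)$ in $\E_0(\Omega)$; one can moreover arrange, via Cegrell's approximation theorem (Lemma \ref{the cegrell appro}), that the approximants are the natural ones $u^{(j)}(z,a):=\max(u(z,a),-j\psi(z))$ for a fixed $\psi\in\E_0(\Omega)$ with $\psi<0$ on $\Omega$ (e.g.\ $\psi$ a negative defining function), since $\max(v,-j\psi)\in\E_0(\Omega)$ whenever $v\in\F(\Omega)$ and these decrease to $v$. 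The point of using this canonical truncation is that $z\mapsto u^{(j)}(z,a)$ stays plurisubharmonic, $a\mapsto u^{(j)}(z,a)$ remains upper semicontinuous in $a$ (the max of two u.s.c.\ functions), and $\int_\Omega (dd^c u^{(j)}(\cdot,a))^n \le \int_\Omega(dd^c u(\cdot,a))^n < M$ by the comparison principle / the monotonicity of total mass under the truncation in $\F$. Then set $\tilde u_j(z):=\int_X u^{(j)}(z,a)\,d\mu(a)$.

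Next I would verify the three things needed: (1) $\tilde u_j\in\E_0(\Omega)$; (2) $\tilde u_j\searrow\tilde u$; (3) $\sup_j\int_\Omega(dd^c\tilde u_j)^n<\infty$. For (1), plurisubharmonicity of $\tilde u_j$ follows by integrating the sub-mean-value inequality in $z$ against $d\mu(a)$ (using joint measurability, which the u.s.c.\ hypothesis in $a$ and psh in $z$ give), boundedness is clear since $-j\psi(z)\le u^{(j)}(z,a)<0$, and the boundary values vanish because $-j\psi(z)\to 0$ as $z\to\partial\Omega$ forces $\tilde u_j(z)\to 0$ by squeezing. For (2), $u^{(j)}(z,a)\searrow u(z,a)$ pointwise and $u^{(1)}(z,\cdot)$ is $\mu$-integrable, so monotone/dominated convergence gives $\tilde u_j(z)\searrow\tilde u(z)$; also each $\tilde u_j\ge\tilde u$ so $\tilde u>-\infty$ a.e. and $\tilde u\in PSH(\Omega)$ as a decreasing limit.

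The crux is (3): bounding $\int_\Omega(dd^c\tilde u_j)^n$ independently of $j$. Here the naive estimate "exchange $(dd^c)^n$ with $\int_X$" is false because Monge--Amp\`ere is nonlinear, so the main obstacle is controlling the mixed terms. The clean way is to use the energy estimate for the class $\F$: for $v_1,\dots,v_n\in\F(\Omega)$ one has $\int_\Omega dd^cv_1\wedge\cdots\wedge dd^cv_n \le \prod_{k=1}^n \big(\int_\Omega(dd^cv_k)^n\big)^{1/n}$ (Cegrell's inequalities, \cite{Ceg04}). Combined with Fubini, multilinearity of $dd^c\tilde u_j = \int_X dd^c u^{(j)}(\cdot,a)\,d\mu(a)$ expanded as an $n$-fold integral over $X^n$, and the uniform bound $\int_\Omega(dd^c u^{(j)}(\cdot,a))^n\le M$, this yields
\begin{equation*}
\int_\Omega (dd^c\tilde u_j)^n \;=\; \int_{X^n}\!\!\int_\Omega dd^c u^{(j)}(\cdot,a_1)\wedge\cdots\wedge dd^c u^{(j)}(\cdot,a_n)\, d\mu(a_1)\cdots d\mu(a_n)\;\le\; \int_{X^n} M\, d\mu^{\otimes n} \;=\; M,
\end{equation*}
using $\mu(X)=1$. (The interchange of $(dd^c)^n$ with $\int_X$ at the level of currents is justified by a standard approximation: smooth the $u^{(j)}$ in $z$, use that the smoothing commutes with the bounded integral $\int_X\cdot\,d\mu$, apply multilinearity for smooth forms, then pass to the limit via the continuity of Monge--Amp\`ere along decreasing sequences of bounded psh functions.) This gives $\sup_j\int_\Omega(dd^c\tilde u_j)^n\le M<\infty$, and therefore $\tilde u\in\F(\Omega)$, completing the proof.
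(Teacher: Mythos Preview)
Your argument is essentially correct and yields the same bound $\int_\Omega(dd^c\tilde u)^n\le M$, but it proceeds by a genuinely different route from the paper. Two small issues first: (a) there is a sign slip — with $\psi\in\E_0(\Omega)$ one has $\psi<0$, so the canonical truncation should be $u^{(j)}(z,a)=\max\{u(z,a),\,j\psi(z)\}$, not $\max\{u(z,a),-j\psi(z)\}$; with this correction your sandwich $j\psi\le u^{(j)}<0$ and the boundary limit are fine. (b) The invocation of Lemma~\ref{the cegrell appro} is unnecessary here; what you use is simply that $\max\{u(\cdot,a),j\psi\}\in\E_0(\Omega)$ whenever $u(\cdot,a)\in\F(\Omega)$, which is immediate from boundedness, the squeeze at the boundary, and Lemma~\ref{compa NP}.

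The paper's proof discretizes the \emph{parameter space} rather than truncating in~$z$: it partitions $X$ into small sets $U_{j,k}$, forms the finite convex combination $u_j=\sum_k\mu(U_{j,k})\sup_{a\in U_{j,k}}u(\cdot,a)$, and observes (using upper semicontinuity in $a$ and the refinement property) that $\tilde u_j:=(u_j)^*$ decreases to $\tilde u$. The mass bound then follows from Cegrell's inequality applied to a \emph{finite} sum, so no measure-theoretic Fubini argument for currents is needed. Your approach instead keeps the full $X$-integral at every stage and produces approximants directly in $\E_0(\Omega)$; the price is that the identity
\[
\int_\Omega(dd^c\tilde u_j)^n=\int_{X^n}\int_\Omega dd^cu^{(j)}(\cdot,a_1)\wedge\cdots\wedge dd^cu^{(j)}(\cdot,a_n)\,d\mu^{\otimes n}
\]
must be justified. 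Your smoothing sketch is one way; a cleaner route (avoiding the shrinking-domain issue) is to peel off one factor at a time via the Bedford--Taylor definition $dd^c v\wedge T=dd^c(vT)$ together with linearity of $dd^c$ and ordinary Fubini for the bounded integrand $v_a\,T$, which yields the identity at the level of currents for uniformly bounded $u^{(j)}(\cdot,a)$. Both arguments ultimately rest on Cegrell's mixed Monge--Amp\`ere inequality; the paper's discretization is slightly more elementary, while your construction has the advantage that the approximating sequence lies in $\E_0(\Omega)$ from the outset.
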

\begin{proof}
	It is obvious that $\tilde{u}\in PSH^-(\Omega)$.
	
	Since $X$ is compact, for every $j\in\Z^+$, we can divide $X$ into a finite pairwise disjoint collection of sets
	 of diameter less than $\dfrac{1}{2^j}$. We denote these sets by $U_{j,1},..., U_{j,m_j}$. 
	 We can furthermore assume that for every $1\leq k\leq m_{j+1}$, there exists $1\leq l\leq m_j$ such that
	 $U_{j+1, k}\subset U_{j, l}$.
	 
	 For every $j\in Z^+$, we define
	 \begin{center}
	 	$u_j(z)=\sum\limits_{k=1}^{m_j}\mu (U_{j, k})\sup\limits_{a\in U_{j,k}}u(z, a)\qquad$ and $\qquad\tilde{u}_j=(u_j)^*$.
	 \end{center}
 Then $\tilde{u}_j\in\F (\Omega)$. Moreover, by \cite{Ceg04}, we have
 \begin{center}
 	$\int\limits_{\Omega}(dd^c\tilde{u}_j)^n\leq M,$
 \end{center}
for all  $j\in Z^+$. 

By the semicontinuity of $u(z, .)$, we get that $\tilde{u}_j$ is decreasing to $\tilde{u}$ as $j\rightarrow\infty$. Hence,
$\tilde{u}\in\F(\Omega)$ and $\int\limits_{\Omega}(dd^c\tilde{u})^n\leq M$.
\end{proof}
Recall that if $u$ is a radial plurisubharmonic function then $u(z)=\chi (\log |z|)$ for some convex, increasing function $\chi$. We have the
following lemma:
\begin{Lem}\label{lem 2 main2}
	Let $u=\chi (\log |z|)$ be a radial plurisubharmonic function in $\B^{2n}$. Then, $u\in\F (\B^{2n})$ iff the following conditions hold
	\begin{itemize}
		\item [(i)] $\lim\limits_{t\to 0^-}\chi (t)=0$;
		\item [(ii)] $\lim\limits_{t\to 0^-}\dfrac{\chi (t)}{t}<\infty$.
	\end{itemize}
\end{Lem}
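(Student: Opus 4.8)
The plan is to reduce the membership $u = \chi(\log|z|) \in \F(\B^{2n})$ to the one-variable analysis of the convex increasing function $\chi$ on $(-\infty, 0)$, using the explicit formula for the Monge--Amp\`ere measure of a radial plurisubharmonic function together with the characterization $\F(\Omega) = \{v\in\mathcal N(\Omega): \int_\Omega (dd^cv)^n < \infty\}$ recalled in the introduction. First I would record the classical computation: if $u(z) = \chi(\log|z|)$ with $\chi$ convex increasing, then away from the origin $u$ is smooth in the radial direction and
\begin{equation}\label{radialMA}
	(dd^cu)^n = c_n\, (\chi')^{n-1}(\log|z|)\, \chi''(\log|z|)\, \frac{d\lambda(z)}{|z|^{2n}} \quad\text{(on }\{z\neq 0\}\text{)},
\end{equation}
for a dimensional constant $c_n > 0$, which after the substitution $t = \log|z|$ becomes, up to a constant, $\int_{\{r_0 \le |z| < 1\}} (dd^cu)^n \asymp \int_{\log r_0}^{0} (\chi')^{n-1}\chi''\, dt = \tfrac1n\big[(\chi'(t))^n\big]_{\log r_0}^{0}$. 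Thus the total Monge--Amp\`ere mass on the punctured ball is finite if and only if $\chi'(0^-) := \lim_{t\to 0^-}\chi'(t) < \infty$, and since $\chi$ is convex increasing this limit equals $\lim_{t\to 0^-}\chi(t)/t$ precisely when also $\lim_{t\to 0^-}\chi(t) = 0$; so conditions (i) and (ii) are exactly the finite-mass condition near $|z| = 1$.

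Next I would handle the two implications. For the ``only if'' direction: if $u \in \F(\B^{2n})$ then by \cite{Aha07} (quoted in the introduction) $\limsup_{\xi\to z}u(\xi) = 0$ for every boundary point, which forces $\lim_{t\to 0^-}\chi(t) = 0$, giving (i); and $\int_{\B^{2n}}(dd^cu)^n < \infty$ forces the finiteness of $\chi'(0^-)$ by \eqref{radialMA}, which together with (i) gives (ii). One must also check that the origin contributes no extra mass obstruction — but $u \in \F$ already guarantees finite total mass, so this is automatic. For the ``if'' direction, assume (i) and (ii). Condition (i) plus convexity shows $u$ extends continuously by $0$ to $\partial\B^{2n}$, and radial symmetry makes $u$ its own smallest maximal plurisubharmonic majorant, i.e. $u \in \mathcal N(\B^{2n})$; indeed one can exhibit a defining sequence by truncating $\chi$ near $-\infty$: set $\chi_j = \max(\chi, -j)$ composed appropriately, or more carefully $u_j(z) = \max(u(z), j(|z|^2 - 1))$... actually the cleanest route is to use the Cegrell approximation (Lemma \ref{the cegrell appro}) once finite mass is known, so I would instead directly estimate $\int_{\B^{2n}}(dd^cu)^n$: the mass on $\{|z| \ge r_0\}$ is bounded by \eqref{radialMA} and (ii), while the mass on $\{|z| \le r_0\}$ is controlled because $u$ restricted to a smaller ball, being radial and bounded below near that sphere, lies in $\F$ of that ball (or: $\chi(t) + C$ for suitable $C$ is negative and the truncations $\max(u, -j)$ have uniformly bounded mass by the comparison principle against an explicit barrier). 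Hence $\int_{\B^{2n}}(dd^cu)^n < \infty$, and combined with $u \in \mathcal N$ this yields $u \in \F(\B^{2n})$.

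The main obstacle I anticipate is the rigorous justification of \eqref{radialMA} across $|z|=0$ together with the behavior at the origin: when $\chi'(-\infty) = 0$ (the generic case) the formula integrates cleanly, but one should make sure that if $\chi$ is merely convex (so $\chi''$ is a measure, possibly with atoms) the computation $\int (\chi')^{n-1}\chi'' = \tfrac1n[(\chi')^n]$ still holds — this is a standard but slightly delicate approximation by smooth convex $\chi_\varepsilon \searrow \chi$ and using continuity of the Monge--Amp\`ere operator along decreasing sequences (valid here since total masses stay bounded, which is where (ii) re-enters). A secondary technical point is verifying $u \in \mathcal N(\B^{2n})$ cleanly; I would phrase it as: the sequence $u_j := \big(\text{the regularized maximum of } u \text{ and } -j\big)$ lies in $\E_0(\B^{2n})$... no — rather, any $v \in PSH^-$ with $\lim_{z\to\partial\B}v(z)=0$ automatically has smallest maximal majorant $0$, so (i) alone gives $u \in \mathcal N$, and the only real work is the mass bound. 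I expect the write-up to be short once \eqref{radialMA} is in hand.
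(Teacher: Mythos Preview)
Your approach is correct in spirit but takes a genuinely different route from the paper. You compute the Monge--Amp\`ere mass of a radial function explicitly and then appeal to the characterization $\F=\{v\in\mathcal N:\int(dd^cv)^n<\infty\}$; this yields the exact total mass (proportional to $(\chi'(0^-))^n$) and makes the link with condition~(ii) transparent, but it forces you to justify the radial formula for merely convex $\chi$ and, more importantly, to check that $u\in\mathcal E$ before you may place it in $\mathcal N$---a point your write-up glosses over when you assert that (i) alone gives $u\in\mathcal N$. The paper bypasses all of this by a direct comparison with the single explicit barrier $k\log|z|$: for sufficiency of (ii), one has $u(z)>k_0\log|z|\in\F(\B^{2n})$ for some $k_0$, and $\F$ is upward-closed in $PSH^-$; for necessity of (ii), its failure gives $u<k\log|z|$ near $\partial\B^{2n}$ for every $k$, so $u_k:=\max\{u,k\log|z|\}$ agrees with $k\log|z|$ near the boundary, whence $\int(dd^cu)^n\geq\int(dd^cu_k)^n=k^n\int(dd^c\log|z|)^n\to\infty$. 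The paper's argument is only a few lines and uses nothing beyond monotonicity of total mass in $\F$ and the fact that functions in $\F$ agreeing near the boundary have equal total mass; yours gives more quantitative information but at the cost of the technical overhead you yourself flag.
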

\begin{proof}
	It is clear that $(i)$ a necessary condition for  $u\in\F (\B^{2n})$. We need to show that, when $(i)$ is satisfied,
	the condition  $u\in\F (\B^{2n})$ is equivalent to $(ii)$.
	
	 If $(ii)$ is satisfied then there exists $k_0\gg 1$ such that $k_0t<\chi(t)$. Hence $u(z)>k_0\log|z|\in \F (\B^{2n})$. Thus,
	 $u\in\F (\B^{2n})$.
	 
	 Conversely, if $(ii)$ is not satisfied, we consider the functions $u_k=\max\{u, k\log |z| \}$.
	  Then, for every $k$, $u_k>u$ near $\partial\B^{2n}$.
	 Hence
	 \begin{center}
	 $\int\limits_{\Omega}(dd^cu)^n\geq\int\limits_{\Omega}(dd^cu_k)^n
	 =k^n\int\limits_{\Omega}(dd^c\log|z|)^n\stackrel{k\to\infty}{\longrightarrow}\infty.$
	  \end{center}
  Thus $u\notin\F(\B^{2n})$.
  
  The proof is completed.
\end{proof}
\begin{proof}[Proof of Theorem \ref{main 2}.]
Denote by $\mu$ the unique invariant probability measure on the unitary group $U(n)$.
 For every $z\in\B^{2n}$, we define
\begin{center}
	$\tilde{u}(z)=\int\limits_{U(n)}u(\phi (z))d\mu(\phi)=\dfrac{1}{c_{2n-1}|z|^{2n-1}}\int\limits_{\{|w|=|z|\}}u(w)d\sigma(w)$,
\end{center}
where $c_{2n-1}$ is the $(2n-1)$-dimensional volume of $\partial\B^{2n}$.

By Lemma \ref{lem 1 main2}, we have $\tilde{u}\in\F (\B^{2n})$. Since $\tilde{u}$ is radial, we have, by Lemma \ref{lem 2 main2},
\begin{center}
	$\lim\limits_{|z|\to 1^-}\dfrac{\tilde{u}(z)}{|z|-1}=\lim\limits_{|z|\to 1^-}\dfrac{\tilde{u}(z)}{\log|z|}<\infty$.
\end{center}
Hence
	\begin{center}
	$\lim\limits_{r\to 1^-}\dfrac{\int_{\{|z|=r\}}|u(z)|d\sigma(z)}{1-r}=M<\infty$.
\end{center}
Consequently, we have
\begin{center}
	$\limsup\limits_{d\to 0^+}\dfrac{Vol_{2n}(\{z\in\mathbb{B}^{2n} : \|z\|=1-d, u(z)<-Ad\})}{d}\leq\dfrac{M}{A}$,
\end{center}
for all $A>0$.

By using spherical coordinates to estimate integrals, we get the last assertion of Theorem \ref{main 2}.

The proof is completed.
\end{proof}
\section{Proof of Theorem \ref{main 3}}
\subsection{An approximation lemma}
In order to prove  Theorem \ref{main 3}, we need the following lemma:
\begin{Lem}\label{lem appr}
	Let $\Omega$ be a hyperconvex domain in $\C^n$ and $u\in PSH^-(\Omega)$. Assume that there are
	$u_j\in\F (\Omega)$, $j\in\N$, such that $u_j$ converges almost everywhere to
	$u$ as $j\rightarrow\infty$. If
	$\sup_{j>0}\int_{\Omega}(dd^cu_j)^n<\infty$ then $u\in\F(\Omega)$.
\end{Lem}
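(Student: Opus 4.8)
The plan is to promote the a.e.\ convergence of $\{u_j\}$ to a genuine decreasing convergence and then feed the result into Cegrell's structure theory for $\F$. First I would set, for $k\in\N$,
$v_k:=\big(\sup_{j\ge k}u_j\big)^*$. Since the families $\{u_j\}_{j\ge k}$ are nested and bounded above by $0$, each $v_k$ is a negative plurisubharmonic function and $(v_k)_k$ is decreasing. Because $u_j\to u$ a.e., the decreasing pointwise limit $\sup_{j\ge k}u_j\to\limsup_ju_j$ equals $u$ a.e.; moreover $\big(\sup_{j\ge k}u_j\big)^*=\sup_{j\ge k}u_j$ off a pluripolar (hence Lebesgue-null) set, so $\lim_kv_k=u$ a.e. As $\lim_kv_k$ is plurisubharmonic (it dominates $u\not\equiv-\infty$), and two plurisubharmonic functions agreeing a.e.\ coincide everywhere, we get $v_k\searrow u$ on all of $\Omega$.

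Next I would check that each $v_k$ lies in $\F(\Omega)$ with controlled mass. Since $u_k\le v_k\le 0$ and $u_k\in\F(\Omega)$, the stability of $\F$ under passing to a larger negative plurisubharmonic function (\cite{Ceg04}) gives $v_k\in\F(\Omega)$; the monotonicity of the total Monge--Amp\`ere mass on $\F$ (again \cite{Ceg04}) then yields $\int_\Omega(dd^cv_k)^n\le\int_\Omega(dd^cu_k)^n\le C$, where $C:=\sup_j\int_\Omega(dd^cu_j)^n<\infty$. Finally I would reduce to $\E_0$: fixing $\rho\in\E_0(\Omega)\cap PSH(\Omega)$ with $\rho<0$ on $\Omega$ (available since $\Omega$ is hyperconvex), put $h_k:=\max(v_k,k\rho)$. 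Each $h_k$ is bounded, tends to $0$ on $\partial\Omega$ (because $k\rho\to0$ there while $v_k\le0$), and satisfies $\int_\Omega(dd^ch_k)^n\le\int_\Omega(dd^cv_k)^n\le C$ by mass monotonicity, so $h_k\in\E_0(\Omega)$. Moreover $(h_k)_k$ is decreasing, since $v_{k+1}\le v_k$ and $(k+1)\rho\le k\rho$ (as $\rho<0$), and $h_k\searrow u$ pointwise on $\Omega$ (for each $z\in\Omega$ one has $k\rho(z)\to-\infty$ while $v_k(z)\to u(z)$, so $h_k(z)\to u(z)$). Thus $u$ is a decreasing limit of functions in $\E_0(\Omega)$ with uniformly bounded Monge--Amp\`ere mass, i.e.\ $u\in\F(\Omega)$ by definition.

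I expect the only genuine content to be the first step — replacing $\{u_j\}$ by the decreasing family $\{v_k\}$ with the same limit — while the rest is bookkeeping, provided the two facts about $\F$ used above are quoted from \cite{Ceg04}: that $\F$ is closed from above among negative plurisubharmonic functions, and that the total mass is nonincreasing along increasing families inside $\F$. The point that needs a little care is that $\lim_kv_k$ equals $u$ \emph{everywhere} and not merely a.e., which is exactly what plurisubharmonicity of the limit provides; one should also make sure the chosen $\rho$ is strictly negative in the interior of $\Omega$, so that the truncations $h_k$ decrease to $u$ at every point of $\Omega$.
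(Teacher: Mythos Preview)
Your argument is correct and its first half matches the paper's: both replace $\{u_j\}$ by the decreasing family $v_k=(\sup_{j\ge k}u_j)^*\in\F(\Omega)$ with mass $\le C$. (The paper inserts an extra $\max\{u,\cdot\}$ inside the supremum, which makes $v_k\ge u$ hold outright and justifies your parenthetical ``it dominates $u$''; as written you have not shown that, though the conclusion that $\lim_kv_k$ is plurisubharmonic still follows from $\lim_kv_k=u$ a.e.) The second half is genuinely different. The paper invokes Cegrell's approximation theorem (Lemma~\ref{the cegrell appro}) to obtain $w_j\in\E_0(\Omega)\cap C(\Omega)$ decreasing to $u$, then bounds $\int_\Omega(dd^cw_j)^n$ by applying the comparison principle on the sets $\{v_k<w_j\}$ and letting $k\to\infty$. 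You instead truncate directly, setting $h_k=\max(v_k,k\rho)$ for a bounded negative exhaustion $\rho$, and read off $h_k\in\E_0$ with mass $\le C$ from the upward stability and mass monotonicity of $\F$. Your route is a bit more self-contained, bypassing the approximation theorem entirely; the paper's route, in exchange, makes no use of an explicit exhaustion and ties the argument back to the Bedford--Taylor comparison principle. Note incidentally that you only use that $\rho$ is a bounded negative plurisubharmonic exhaustion (available by hyperconvexity), not that $\rho\in\E_0$.
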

This lemma has been proved in \cite{NP09}. For the reader's convenience, we also give the details of
the proof. First, we need the following lemmas:
\begin{Lem}\label{the cegrell appro}\cite{Ceg04}
	Let $u\in PSH^-(\Omega)$. Then there exists a decreasing sequence of functions
	$u_j\in\mathcal{E}_0(\Omega)\cap C(\Omega)$ such that $\lim_{j\to \infty}u_j(z)=u(z)$
	for every $z\in\Omega$.
\end{Lem}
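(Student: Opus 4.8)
The plan is to build the approximating sequence in two stages: first replace $u$ by a decreasing sequence of plurisubharmonic functions that are merely continuous on $\Omega$, and then correct each of these, by solving an obstacle problem, so that it lands in $\mathcal{E}_0(\Omega)\cap C(\bar\Omega)$.

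For the first stage I would use the classical ``supremum over shrinking balls'' regularization. Since $\Omega$ is hyperconvex it is pseudoconvex, so $-\log d(\cdot,\partial\Omega)$ is plurisubharmonic; hence the functions $h_j(z):=\sup\{u(w):|w-z|\le \tfrac{1}{j}\,d(z,\partial\Omega)\}$ are plurisubharmonic on $\Omega$, are $\le 0$ (because $u\le 0$), are continuous on $\Omega$, and decrease pointwise to $u$ as $j\to\infty$ by upper semicontinuity of $u$; note also $h_j\ge u$ since the ball contains its center. I would also fix once and for all a continuous plurisubharmonic exhaustion $\varphi$ of $\Omega$ with $\varphi<0$ on $\Omega$ and $\varphi|_{\partial\Omega}=0$ (which exists since $\Omega$ is hyperconvex), and set $K_j:=\{z\in\Omega:\varphi(z)\le -1/j\}$, a compact exhaustion of $\Omega$.

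For the second stage put $\psi_j:=\max(h_j,-j)$, a bounded continuous plurisubharmonic function, and choose continuous functions $g_j$ on $\bar\Omega$ with $\psi_j\le g_j\le 0$, with $g_j=\psi_j$ on a relatively compact open neighbourhood of $K_j$, with $g_j\equiv 0$ near $\partial\Omega$, and --- doing this compatibly in $j$ --- with $g_{j+1}\le g_j$; this is possible because $\psi_{j+1}\le\psi_j\le g_j$. Let $v_j:=\bigl(\sup\{\phi\in PSH(\Omega):\phi\le g_j\}\bigr)^{*}$ be the Perron-Bremermann envelope below $g_j$. Since $\psi_j$ is itself a competitor one gets $-j\le\psi_j\le v_j\le g_j\le 0$, so $v_j$ is bounded; on the open set where $g_j=\psi_j$ (and $\psi_j$ is plurisubharmonic there) this forces $v_j=\psi_j$, and $v_j$ is maximal outside its coincidence set $\{v_j=g_j\}$, which is contained in the relatively compact set $\{g_j<0\}$, so $(dd^cv_j)^{n}$ has compact support in $\Omega$ and hence finite total mass (as $v_j$ is bounded). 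That $\lim_{z\to\partial\Omega}v_j(z)=0$ and --- this is the one non-formal point --- that $v_j$ extends continuously to $\bar\Omega$ follow from the standard theory of the Dirichlet problem on a hyperconvex domain: the obstacle $g_j$ is continuous on $\bar\Omega$ and every boundary point of a hyperconvex domain is regular (the exhaustion $\varphi$ serves as a global barrier), so by the Walsh-type continuity theorem $v_j\in C(\bar\Omega)$ with $v_j|_{\partial\Omega}=0$. Hence $v_j\in\mathcal{E}_0(\Omega)\cap C(\bar\Omega)\subset\mathcal{E}_0(\Omega)\cap C(\Omega)$.

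Finally, $g_{j+1}\le g_j$ forces $v_{j+1}\le v_j$, so the sequence is decreasing; and for a fixed $z\in\Omega$ one has $\varphi(z)<0$, hence $z$ lies in the interior of $K_j$ for all large $j$, where $v_j=\psi_j=\max(h_j,-j)$, so letting $j\to\infty$ gives $v_j(z)\to u(z)$ because $h_j(z)\to u(z)$ and $-j\to-\infty$. Thus $v_j\searrow u$, which proves the lemma. I expect the only genuine work, besides checking these routine inequalities, to be the two ``continuity'' inputs --- continuity of the ball-sup regularizations $h_j$ and, more substantially, continuity up to the boundary of the envelopes $v_j$ --- both of which are standard facts about plurisubharmonic functions and the Monge-Amp\`ere Dirichlet problem on hyperconvex domains.
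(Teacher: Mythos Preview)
The paper does not prove this lemma at all; it is simply quoted from \cite{Ceg04}. So there is no ``paper's own proof'' to compare with, only the original Cegrell argument (which proceeds by mollifying on an exhaustion by relatively compact subdomains and then gluing via maxima with large multiples of a continuous exhaustion function --- a rather different, and more delicate, route than the one you outline).

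Your Stage 2 is reasonable, but Stage 1 has a genuine gap: the functions
\[
h_j(z)=\sup\Bigl\{u(w):|w-z|\le \tfrac{1}{j}\,d(z,\partial\Omega)\Bigr\}
\]
are \emph{not} plurisubharmonic in general, and the sentence ``$-\log d(\cdot,\partial\Omega)$ is plurisubharmonic, hence $h_j$ is plurisubharmonic'' is a non sequitur. Taking the supremum of a psh function over a ball of \emph{constant} radius gives a psh function; replacing the radius by a non-pluriharmonic function of the center destroys this. A concrete counterexample already in one variable: $\Omega=\mathbb{D}$, $u(w)=\mathrm{Re}\,w-1\in PSH^-(\mathbb{D})$. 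Then
\[
h_j(z)=\mathrm{Re}\,z-1+\tfrac{1}{j}\bigl(1-|z|\bigr),
\]
and since $\Delta(-|z|)=-1/|z|<0$ away from the origin, $h_j$ is strictly superharmonic, not subharmonic. Thus your ``supremum over shrinking balls'' regularization fails, and everything built on it (in particular that $\psi_j=\max(h_j,-j)$ is a psh competitor for the envelope, so that $v_j=\psi_j$ on a neighbourhood of $K_j$) collapses.

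The remedy is exactly the more laborious construction you were trying to shortcut: mollify with a \emph{fixed} radius $\varepsilon_j$ to get smooth psh functions on $\{d(\cdot,\partial\Omega)>\varepsilon_j\}$, and then patch with $\max\{\,\cdot\,,\,\lambda_j\varphi\}$ for suitable $\lambda_j$ using the continuous exhaustion $\varphi$ to extend across the boundary layer while preserving plurisubharmonicity and monotonicity. That gluing step is where the real work in Cegrell's proof lies.
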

\begin{Lem}\label{compa NP}
	Let $u, v\in \mathcal{F}(\Omega)$ be such that $u\leq v$ on $\Omega$. Then
	$$\int_{\Omega}(dd^cu)^n\geq\int_{\Omega}(dd^cv)^n.$$
\end{Lem}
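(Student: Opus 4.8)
The plan is to reduce the statement to the analogous (and classical) inequality for bounded functions in $\mathcal{E}_0(\Omega)$ and then to pass to the limit using the continuity of the Monge--Amp\`ere mass along decreasing sequences in $\mathcal{F}(\Omega)$ recalled in the introduction.

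First I would fix, by Lemma \ref{the cegrell appro}, decreasing sequences $u_j, v_j \in \mathcal{E}_0(\Omega)\cap C(\Omega)$ with $u_j \searrow u$ and $v_j \searrow v$. Since $u \le v$, the sequence $w_j := \max(u_j, v_j)$ is again decreasing, lies in $\mathcal{E}_0(\Omega)$ (the class $\mathcal{E}_0$ is stable under taking maxima, see \cite{Ceg04}), satisfies $w_j \ge u_j$, and converges pointwise decreasingly to $\max(u,v) = v$. Thus $\{u_j\}$ and $\{w_j\}$ are decreasing sequences in $\mathcal{E}_0(\Omega)$ with limits $u \in \mathcal{F}(\Omega)$ and $v \in \mathcal{F}(\Omega)$ respectively, and the key point of the reduction is precisely that replacing $v_j$ by $w_j$ restores the pointwise inequality $u_j \le w_j$ without spoiling membership in $\mathcal{E}_0$ or the limit.

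Next I would invoke the comparison principle in the class $\mathcal{E}_0$: if $a \le b$ with $a,b \in \mathcal{E}_0(\Omega)$, then $\int_\Omega (dd^c a)^n \ge \int_\Omega (dd^c b)^n$. Applying this to $a = u_j \le w_j = b$ gives $\int_\Omega (dd^c u_j)^n \ge \int_\Omega (dd^c w_j)^n$ for every $j$. Letting $j \to \infty$ and using Cegrell's theorem (stated in the introduction) that $\int_\Omega (dd^c u_j)^n \to \int_\Omega (dd^c u)^n$ and $\int_\Omega (dd^c w_j)^n \to \int_\Omega (dd^c v)^n$ since $u_j \searrow u \in \mathcal{F}(\Omega)$ and $w_j \searrow v \in \mathcal{F}(\Omega)$, I obtain $\int_\Omega (dd^c u)^n \ge \int_\Omega (dd^c v)^n$, as claimed.

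The only non-formal ingredient is the $\mathcal{E}_0$-level comparison principle, and this is where I expect the main work to lie. For bounded plurisubharmonic functions with vanishing boundary values one writes, for $c>0$, the Bedford--Taylor comparison principle \cite{BT76, BT82} on the sublevel set $\{a < b - c\}$ (whose boundary data are controlled because $a,b \to 0$ at $\partial\Omega$), obtaining $\int_{\{a<b-c\}}(dd^c b)^n \le \int_{\{a<b-c\}}(dd^c a)^n \le \int_\Omega (dd^c a)^n$, and then lets $c \downarrow 0$ to get $\int_{\{a<b\}}(dd^c b)^n \le \int_\Omega (dd^c a)^n$. The contribution over the contact set $\{a=b\}$ is handled by the Bedford--Taylor locality principle, which gives $\mathbf{1}_{\{a=b\}}(dd^c b)^n = \mathbf{1}_{\{a=b\}}(dd^c a)^n$; summing the two contributions and using $\Omega = \{a<b\}\sqcup\{a=b\}$ (valid since $a \le b$) yields the inequality. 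Alternatively one may simply quote this comparison principle from \cite{Ceg04}.
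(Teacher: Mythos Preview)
Your argument is correct and follows essentially the same route as the paper: approximate $u$ and $v$ by decreasing sequences in $\mathcal{E}_0$, replace the approximants of $v$ by their maxima with those of $u$ to restore the pointwise order, apply the Bedford--Taylor comparison principle at each level, and pass to the limit. The paper uses $(1-2^{-j})\max\{u_j,v_j\}$ in place of your simpler $\max\{u_j,v_j\}$, but this extra factor is not needed for the argument, and your version works just as well; likewise the paper takes the approximants directly from the definition of $\mathcal{F}$ rather than from Lemma~\ref{the cegrell appro}, which is immaterial since Cegrell's convergence result applies to either choice.
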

\begin{proof}
	Let $\{u_j\}_{j\in\mathbb{N}},\{v_j\}_{j\in\mathbb{N}}\subset\mathcal{E}_0(\Omega)$ be decreasing sequences such that $u_j\searrow u$, $v_j\searrow v$ on $\Omega$ and
	$$\sup\limits_{j>0}\int_{\Omega}(dd^cu_j)^n<+\infty,\ \ \sup\limits_{j>0}\int_{\Omega}(dd^cv_j)^n<+\infty.$$
	Replacing $v_j$ by $(1-\dfrac{1}{2^j})\max\{v_j,u_j\}$, we can assume that $v_j\geq u_j$. 
	By the Bedford-Taylor comparison principle \cite{BT76, BT82} (see also \cite{Kli91}), we obtain, for every $j$,
	$$\int_{\Omega}(dd^cu_j)^n\geq\int_{\Omega}(dd^cv_j)^n.$$
	Letting $j\rightarrow+\infty$, we get
	$$\int_{\Omega}(dd^cu)^n\geq\int_{\Omega}(dd^cv)^n,$$
	as desired.
\end{proof}
\begin{proof}[Proof of Lemma \ref{lem appr}]
	For every $k\geq 1$, we denote
	\begin{center}
		$u^k(z)=\sup\limits_{j\geq k}\max \{u, u_j\}$.
	\end{center}
	Then, we have
	\begin{itemize}
		\item [(i)]$v_k:=(u^k)^*\in PSH^-(\Omega)$ for all $k\geq 1$.
		\item [(ii)] $v_k$ is a decreasing sequence satisfying $v_k\geq u$ for every $k\geq 1$.
		\item [(iii)] $v_k=u^k$ almost everywhere and $u^k$ converges to $u$ almost everywhere.
	\end{itemize}
	By (iii),  we have $\lim_{k\to\infty}v_k=u$ almost everywhere. Since 
	$u$ and $\lim_{k\to\infty}v_k$ are plurisubharmonic, we get $u=\lim_{k\to\infty}v_k$.
	
	Since $0\geq v_k\geq u_k$, we have $v_k\in\F (\Omega)$. Moreover, by using Lemma
	\ref{compa NP}, we obtain
	\begin{center}
		$C:=\sup\limits_{j>0}\int\limits_{\Omega}(dd^cu_j)^n\geq \int\limits_{\Omega}(dd^cv_k)^n,$
	\end{center}
	for every $k\geq 1$.
	
	Now, it follows from Lemma \ref{the cegrell appro} that there exists a decreasing sequence
	$w_k\in\E_0 (\Omega)\cap C(\Omega)$ such that $\lim_{j\to\infty}w_j(z)=u(z)$ in $\Omega$.
	Replacing $w_j$ by $(1-j^{-1})w_j$, we can assume that $w_j(z)>u(z)$
	for every $j>0, z\in\Omega$. Applying Lemma \ref{compa NP}, we have
	\begin{center}
		$\int\limits_{\{v_k<w_j\}}(dd^cw_j)^n\leq \int\limits_{\{v_k<w_j\}}(dd^cv_k)^n\leq C,$
	\end{center}
	for every $j, k>0$.
	
	Letting $k\rightarrow \infty$, we get,
	\begin{center}
		$\int\limits_{\Omega}(dd^cw_j)^n\leq C,$
	\end{center}
	for every $j>0$.
	
	Thus, $u\in\F (\Omega)$.
\end{proof}
\subsection{Proof of Theorem \ref{main 3}}
For every $0<a<1$, we denote $S_a=\{\phi\in U(n): \|\phi-Id\|<a \}$ .

 For every $0<\epsilon, a<1$ and $z\in\B^{2n}_{1-\epsilon}:=\{w\in\C^n: \|w\|<1-\epsilon \}$, we define
 \begin{center}
 	$u_{a,\epsilon}(z)=(\sup\{u((1+r)\phi(z)): \phi\in S_a, 0\leq r\leq\epsilon \})^*$.
 \end{center}
 Then $u_{a, \epsilon}$ is plurisubharmonic in $\B^{2n}_{1-\epsilon}$ satisfying
 \begin{equation}\label{eq1 proof3}
 \lim\limits_{a\to 0^+}\lim\limits_{\epsilon\to 0^+}u_{a, \epsilon}(z)=u(z),
 \end{equation}
 for every $z\in\Omega$.
 
  Moreover, for $z\neq 0$,
 \begin{equation}\label{eq2 proof3}
 	u_{a,\epsilon}(z)=(\sup\{u(\xi): \xi\in B_{a,\epsilon, z} \})^*,
 \end{equation}
 where
 \begin{center}
 $B_{a,\epsilon, z}=\{\xi\in\C^n: \|\dfrac{z}{\|z\|}-\dfrac{\xi}{\|\xi\|}\|<a, 
 \|z\|\leq\|\xi\|\leq(1+\epsilon)\|z\|  \}.$
 \end{center}
 It is obvious that there exist $C_1, C_2>0$ such that
 \begin{equation}\label{eq3 proof3}
 	C_1 a^{2n-1}\epsilon<Vol_{2n}(B_{a,\epsilon, z})<C_2a^{2n-1}\epsilon, 
 \end{equation}
 for every $0<\epsilon, a<1/2$ and $1/2<\|z\|<1-a$.
 
By \eqref{eq main3}, \eqref{eq2 proof3} and \eqref{eq3 proof3}, for every $1/2>a>0$, there exists
$\epsilon_a>0$ such that, for every $\epsilon_a>3\epsilon\geq 1-\|z\|\geq\epsilon>0$,
we have
\begin{equation}
u_{a, \epsilon}(z)\geq -3A\epsilon.
\end{equation}
For each $1/2>a>0$ and $\epsilon_a>3\epsilon>0$, we consider the following function
\begin{center}
	$\tilde{u}_{a, \epsilon}(z)=
	\begin{cases}
3A(-1+|z|^2)\quad\mbox{ if }\quad 1-\epsilon\leq \|z\|\leq 1,\\
	\max\{3A(-1+|z|^2), u_{a, \epsilon}(z)-6A\epsilon \}\quad\mbox{ if }
	\quad 1-3\epsilon\leq \|z\|\leq 1-\epsilon,\\
	u_{a, \epsilon}(z)-6A\epsilon\quad\mbox{ if }\quad \|z\|\leq 1-3\epsilon.
	\end{cases}$
\end{center}
Then $\tilde{u}_{a, \epsilon}\in\F(\B^{2n})$ and
\begin{center}
		$\int\limits_{\B^{2n}}(dd^c\tilde{u}_{a, \epsilon})^n
		=\int\limits_{\B^{2n}}(dd^c3A(-1+|z|^2))^n<\infty$,
\end{center}
for every $1/2>a>0$ and $\epsilon_a>3\epsilon>0$. 

Moreover, $\tilde{u}_{a, \epsilon}\stackrel{a. e.}{\longrightarrow}u$ as $a, \epsilon\searrow 0$.
Hence, by Lemma \ref{lem appr}, we have $u\in\F (\Omega)$.

The proof is completed.

\end{document}